\newtheorem{theorem}{Theorem}[section]
\newtheorem{definition}[theorem]{Definition}
\newtheorem{lem}[theorem]{Lemma}
\newtheorem{pro}[theorem]{Proposition}
\numberwithin{equation}{section}
\begin{document}
\title{\vspace{-1cm} \bf Weighted Sobolev estimates of the truncated Beurling operator  \rm}
\author{Yifei Pan\ \  and\ \  Yuan Zhang}
\date{}

\maketitle

\begin{abstract}
 Given   a  bounded planar domain $D$ with  $W^{k+1, \infty}$ boundary, $ k\in \mathbb Z^+$,   and a weight  $\mu\in A_p, 1<p<\infty$, we show that   the corresponding truncated Beurling transform is a bounded operator  sending  $W^{k, p}(D, \mu)$ into itself. Weighted Sobolev estimates for other  Cauchy-type integrals are also obtained.  

\end{abstract}

\renewcommand{\thefootnote}{\fnsymbol{footnote}}
\footnotetext{\hspace*{-7mm}
\begin{tabular}{@{}r@{}p{16.5cm}@{}}
& 2010 Mathematics Subject Classification. Primary 30E20; Secondary 42B37, 46E35.\\
& Key words and phrases.  Truncated Beurling transform, Cauchy-type integrals,  Muckenhoupt's class, weighted Sobolev regularity.
\end{tabular}}

\section{Introduction}
Let us first recall the well-known Beurling transform $B$ on  $f\in L^p(\mathbb C), 1<p<\infty$: 
$$  Bf : = p.v.\frac{-1}{2\pi 
            i}\int_{\mathbb C}\frac{f(\zeta)}{(\zeta-\cdot)^2} d\bar\zeta\wedge d\zeta\ \ \text{on}\ \ \mathbb C, $$
where  {\it p.v.} represents the principal value.  By the  classical Calderon-Zygumund theory \cite{CZ}, $B$ is a bounded operator sending $L^p(\mathbb C)$ (and $W^{k, p}(\mathbb C)$) into $L^p(\mathbb C)$ (and $W^{k, p}(\mathbb C)$, respectively). In this paper, we study the operator theory of  the associated  
truncated Beurling operator $H:  = \chi_D B(\chi_D\cdot)$ with respect to   a bounded  domain $D\subset \mathbb C$. Here $\chi_D$ is the characteristic function of $D$.  Namely,   for  $f\in  L^p(D), 1<p<\infty$,
\begin{equation}\label{op1H}
\begin{split}
  Hf &: = p.v.\frac{-1}{2\pi 
            i}\int_D\frac{f(\zeta)}{(\zeta-\cdot)^2} d\bar\zeta\wedge d\zeta \ \ \text{on}\ \  D.
\end{split}
\end{equation}
Unlike the boundedness of $B$ in $W^{k, p}(\mathbb C) $, the Sobolev regularity of $ H$ requires some necessary boundary regularity of the domain $D$.  For instance,  a polygon    in $\mathbb C$ has  $ W^{1, \infty}$ boundary. However, $\partial H(z)$ behaves like $ \frac{1}{|z-w_0|}$  near any of its vertices $w_0 $ on the boundary (see  \cite[pp. 146]{AIM}), which does not lie in $W^{1, p}$ any more if $p\ge 2 $. 

Besides the Beurling operator,  there are two other Cauchy-type integral operators  that play a crucial role  in complex analysis and singular integral theory. For $1<p<\infty$, 
\begin{equation}\label{op1T}
\begin{split}
 Tf &: =\frac{-1}{2\pi i}\int_D \frac{f(\zeta)}{\zeta- \cdot}d\bar{\zeta}\wedge d\zeta \ \ \text{on}\ \  D,  \ \ \text{for}\ \ f\in  L^p(D);\\
 Sf &: =\frac{1}{2\pi i}\int_{bD}\frac{f(\zeta)}{\zeta- \cdot}d\zeta \ \ \text{on}\ \  D, \ \ \text{for}\ \ f\in  L^p(bD). 
\end{split}
\end{equation}
The solid Cauchy  operator $T$ is known to provide a weak solution to the Cauchy-Riemann equation $\bar\partial u =f$ on $D$ and to improve the H\"older regularity by one order (see \cite{AIM, V});   the boundary Cauchy operator $S$ is  a  singular integral operator when restricting on $bD$,  which sends $L^p(b D)$ continuously  into itself according to a  result of Riesz. The truncated Beurling transform  $H$ is     related to  $T$ in terms of a distributional  derivative of $T$ from the point of view of complex analysis.


Based upon  harmonic analysis methods,  the sequential works of \cite{T, PT, P1}    assert  that the truncated Beurling transform $H$  maintains the (unweighted) $W^{k, p}$ regularity provided that  $p>2$ and the unit normal of $bD$ falls into some Besov space. 
Ever since, such  Sobolev regularity of $H$  has  found substantial applications to  quasi-conformal mapping theory  through Beltrami-type equations \cite{CMO, P2}, and the Cauchy-Riemann equations (or the so-called $\bar\partial$ problem).  
In particular, in the study of the $\bar\partial$ problem on some types of quotient domains that are realized as  proper holomorphic images of product domains,  a machinery   introduced in \cite{MM, CS} (see also \cite{YZ, Zhang2}) has shown that  the corresponding Sobolev regularity  can eventually be reduced to a  weighted Sobolev regularity for  those Cauchy-type integrals. See also a subsequent paper \cite{PZ} of the two authors for an immediate application to an optimal (unweighted) Sobolev regularity of $\bar\partial$ on product domains and the Hartogs triangle.

The goal of this paper, motivated by the results and the   call for weighted Sobolev theory to the Cauchy-type integrals,  
 is to study the weighted Sobolev regularity of these  operators in \eqref{op1H}-\eqref{op1T}. Here our weight space is taken to be  the standard Muckenhoupt's $A_p$ class (see Section 2). Denote  by $\mathbb Z^+$   the set of positive integers and by $W^{k, p}(D, \mu)$ the weighted Sobolev space on $D$ with respect to a weight $\mu$, $k\in \mathbb Z^+\cup\{0\}, 1<p<\infty$.

\begin{theorem}\label{mainT}
Let $D\subset \mathbb C$ be a  bounded domain with $W^{k+1, \infty}$ boundary, $k\in \mathbb Z^+\cup\{0\} $. Assume $\mu\in A_p, 1<p<\infty$.  There exists a constant $C$ dependent only on $D, k$, $p$ and $\mu$, such that for all $f\in W^{k, p}(D, \mu)$,
        \begin{equation}\label{Ho}
       \|H f\|_{W^{k, p}(D, \mu)}\le C \|f\|_{W^{k, p}(D, \mu)}
\end{equation}
and 
\begin{equation}\label{To}
      \|T f\|_{W^{k+1, p}(D, \mu)}\le C \|f\|_{W^{k, p}(D, \mu)}.
      \end{equation}
\end{theorem}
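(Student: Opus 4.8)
The plan is to reduce the weighted Sobolev estimates for the truncated Beurling operator $H$ and the solid Cauchy operator $T$ to the weighted $L^p$ boundedness of a finite family of Calderón–Zygmund singular integral operators on $\mathbb C$, exploiting the $W^{k+1,\infty}$ regularity of $bD$ to commute derivatives past the truncation. The starting point is the classical fact, due to the Muckenhoupt theory and the weighted Calderón–Zygmund theorem, that the full Beurling transform $B$ (and more generally any CZ operator with standard kernel) is bounded on $L^p(\mathbb C,\mu)$ for every $\mu\in A_p$, $1<p<\infty$. Thus, the case $k=0$ of \eqref{Ho} is essentially immediate: one writes $Hf=\chi_D\,B(\chi_D f)$, notes that $\chi_D f\in L^p(\mathbb C,\mu)$ with $\|\chi_D f\|_{L^p(\mathbb C,\mu)}=\|f\|_{L^p(D,\mu)}$, and applies the weighted boundedness of $B$. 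For $T$ at the level $k=0$, I would use the gain-of-one-derivative structure: $\partial_{\bar z}Tf=f$ and $\partial_z Tf=Bf$ in the distributional sense on $D$, so controlling $\|Tf\|_{W^{1,p}(D,\mu)}$ reduces to controlling $\|Tf\|_{L^p(D,\mu)}$, $\|f\|_{L^p(D,\mu)}$, and $\|Bf\|_{L^p(D,\mu)}$; the first follows because $T$ has a weakly singular (integrable) kernel $\frac{1}{\zeta-z}$ and is bounded on weighted $L^p$ by a fractional-integration/Schur-type argument compatible with $A_p$ weights, and the last is the weighted boundedness of $B$ again.

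The genuine work is the inductive passage to higher $k$, where one must differentiate $Hf$ and $Tf$ up to order $k$ and express each derivative back in terms of operators of the same admissible type applied to derivatives of $f$ of order at most $k$. The natural mechanism is integration by parts in the integral defining $Tf$: each derivative $\partial_z$ or $\partial_{\bar z}$ falling on $\frac{1}{\zeta-z}$ can be converted, via $\partial_z\frac{1}{\zeta-z}=\frac{1}{(\zeta-z)^2}$ and integration by parts, into a derivative falling on $f(\zeta)$ plus boundary terms supported on $bD$. The key structural identity to establish is a commutation formula of the schematic form
\begin{equation}\label{plan:comm}
\partial^\alpha(Tf)=\sum T\big(\partial^\beta f\big)+\sum S\big(\gamma_\alpha\,\partial^{\beta'}f|_{bD}\big),\quad |\beta|,|\beta'|\le k,
\end{equation}
where the coefficients $\gamma_\alpha$ are built from derivatives of the components of the outward normal and are controlled because $bD\in W^{k+1,\infty}$. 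An analogous formula holds for $H$ with an extra factor coming from the second-order kernel. The $W^{k+1,\infty}$ hypothesis is exactly what is needed to guarantee that, after $k$ differentiations, the boundary coefficients $\gamma_\alpha$ remain bounded (they involve up to $k$ derivatives of the curvature-type data, hence up to $k+1$ derivatives of a defining function).

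The hard part will be handling the boundary terms in \eqref{plan:comm}: they involve the boundary Cauchy operator $S$ acting on traces of derivatives of $f$, and one must estimate these in the ambient weighted Sobolev norm $\|\cdot\|_{W^{k,p}(D,\mu)}$ rather than merely on $bD$. I anticipate two intertwined difficulties here. First, one needs a \emph{weighted} trace-type inequality controlling $\|\partial^{\beta'}f|_{bD}\|$ in the relevant boundary norm by $\|f\|_{W^{k,p}(D,\mu)}$; because $\mu\in A_p$ can degenerate or blow up near $bD$, the standard (unweighted) trace theorem does not apply directly, and I would instead absorb these boundary contributions by extending $f$ to all of $\mathbb C$ using a weighted Sobolev extension operator (which exists precisely because $A_p$ weights admit Stein-type extension on Lipschitz, a fortiori $W^{k+1,\infty}$, domains) and then realizing the boundary integral as $\chi_D$ times a CZ operator applied to the extension. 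Second, and more technically, one must verify that the resulting solid operators $T(\partial^\beta f)$ and the second-order pieces for $H$ are again of the admissible CZ or weakly-singular type so that the weighted $L^p$ bounds from the base case apply uniformly. My strategy to circumvent the boundary-trace obstacle cleanly is to avoid integration by parts altogether where possible: instead, I would first extend $f\in W^{k,p}(D,\mu)$ to $Ef\in W^{k,p}(\mathbb C,\mu)$ with compact support and comparable norm, differentiate the \emph{full} Beurling transform $B(Ef)$ on $\mathbb C$ (where $\partial^\alpha B = B\,\partial^\alpha$ commute freely and weighted CZ boundedness is available with no boundary terms), and only at the very end restrict to $D$ via $\chi_D$. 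This reduces every derivative estimate for $H$ and $T$ to the single clean inequality $\|\partial^\alpha B(Ef)\|_{L^p(\mathbb C,\mu)}=\|B(\partial^\alpha Ef)\|_{L^p(\mathbb C,\mu)}\le C\|\partial^\alpha Ef\|_{L^p(\mathbb C,\mu)}\le C\|f\|_{W^{k,p}(D,\mu)}$, so that the entire theorem rests on (i) weighted CZ boundedness of $B$ and (ii) the existence of a bounded weighted Sobolev extension operator $E\colon W^{k,p}(D,\mu)\to W^{k,p}(\mathbb C,\mu)$ for $A_p$ weights on $W^{k+1,\infty}$ domains, with (ii) being the main point to secure.
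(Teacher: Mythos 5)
Your fallback strategy --- the commutation formula $\partial^\alpha(Tf)=\sum T(\partial^\beta f)+\text{boundary terms}$ --- is exactly the paper's route (there the identity is $\partial Tf=T\partial f-Lf$ with $Lf=S(\bar\zeta'^2 f)$ after arclength parametrization), but you do not carry it out, and the strategy you actually commit to at the end has a genuine gap. The problem is that $\chi_D B(Ef)$ is \emph{not} $Hf$: by definition $Hf=\chi_D B(\chi_D f)$, so if $Ef$ is a norm-preserving Sobolev extension (necessarily nonzero off $D$), then
\begin{equation*}
Hf=\chi_D B(Ef)-\chi_D B\bigl(\chi_{\mathbb C\setminus D}\,Ef\bigr),
\end{equation*}
and the second term is precisely where all of the boundary regularity of $D$ enters. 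It is smooth in the interior but its derivatives blow up at $bD$, and no amount of commuting $\partial^\alpha$ with $B$ on $\mathbb C$ touches it. The paper's own introduction gives the counterexample that kills this reduction: a polygon has $W^{1,\infty}$ boundary (hence admits bounded Sobolev extension), yet $\partial Hf$ behaves like $|z-w_0|^{-1}$ near a vertex $w_0$ and fails to be in $W^{1,p}$ for $p\ge 2$. If your extension argument were complete, $H$ would preserve $W^{k,p}$ on every Lipschitz extension domain, contradicting this example. So the difference term cannot be dismissed, and securing the weighted extension operator (which you flag as the main point) would not close the argument.

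On the route you should have pursued: your worry about a weighted trace inequality for $\partial^{\beta'}f|_{bD}$ is a red herring in the paper's treatment. Well-definedness of the boundary operator only needs an \emph{unweighted} trace, available because the open-end property of $A_p$ gives $W^{k,p}(D,\mu)\subset W^{k,q}(D)$ for some $q>1$. The quantitative control is obtained without ever estimating traces in a weighted boundary norm: the boundary term $L$ is rewritten as $S(\bar\zeta'^2 f)$, the coefficient $\bar\zeta'^2\in W^{k,\infty}(bD)$ (this is where $bD\in W^{k+1,\infty}$ is used) is extended into $D$, and then the weighted $W^{k,p}(D,\mu)$ bound for $S$ is proved separately by keeping everything as solid integrals: the Cauchy--Green formula $Sf=f-T\bar\partial f$ handles the $L^p$ level, the identity $\partial Sf=H\bar\partial f+\partial f$ together with weighted Calder\'on--Zygmund theory handles one derivative, and an induction formula $\partial^k Sf=\tilde S\tilde f$ with $\|\tilde f\|_{W^{1,p}(D,\mu)}\lesssim\|f\|_{W^{k,p}(D,\mu)}$ handles the rest. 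Your proposal as written neither establishes the commutation identity nor supplies a substitute for this chain.
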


 \medskip
 
The $k=0$ case in the   theorem above is the classical weighted Calderon-Zygmund theory, which is included   for the sake of completeness.  
 We stress  particularly that, in the case when  $\mu \equiv 1$ (the unweighted case), $p>2$,  and  the unit normal $\mathcal N$ of  $bD$ is in the Besov space $ B^{k-\frac{1}{p}}_{p, p}$ (equivalently, $bD\in B^{k+1-\frac{1}{p}}_{p, p} =W^{k+1-\frac{1}{p}, p}$), \eqref{Ho}  was  firstly proved by Prats in \cite{P1} using  technical harmonic analysis approach. 
 In comparison to his result, our Theorem \ref{mainT}  recovers the Sobolev regularity with respect to any arbitrary $A_p$ weight over the full range of $p$, $1< p< \infty$,  by assuming  a slightly stronger boundary regularity $bD\in W^{k+1, \infty}$. Our proof  to Theorem \ref{mainT},    short and analytic, is mostly done through the line of complex analysis and partial differential equations. One of the main ingredients is  an induction formula on $S$, 
 together with a higher order Cauchy-Green formula. Making use of it,  we   reduce the weighted  Sobolev regularity of  $H$ and $T$ to the corresponding estimates in the  Lebesgue spaces, where the  Calderon-Zygmund theory applies.  
 
  It is worth pointing out that, under the same  assumption $\mu \equiv 1, p>2$ and $bD\in  W^{k+1-\frac{1}{p}, p}$ as in \cite{P1},  our proof  can actually  recover the above-mentioned result of Prats. For convenience of the reader, we have included the corresponding argument in our proof. The key observation is  that when $p>2$,  $W^{k, p}(D)$ forms  a multiplication algebra. 


 We also obtain the following weighted $W^{k, p}$ regularity for  $S$ on domains with $W^{k, \infty}$ boundaries (which is one order lower than that in Theorem \ref{mainT}).   
 Note that  the $k=0$ case is excluded since $S$ is not well-defined there. 
 
 \begin{theorem}\label{mainS}
 Let $D\subset \mathbb C$ be a  bounded domain with $W^{k, \infty}$ boundary, $k\in \mathbb Z^+$. Assume $\mu\in A_p, 1<p<\infty$.  There exists a constant $C$ dependent only on $D, k$, $p$ and $\mu$, such that for all $f\in W^{k, p}(D, \mu)$,
      \begin{equation}\label{So}
              \|S f\|_{W^{k, p}(D, \mu)}\le C \|f\|_{W^{k, p}(D, \mu)}.
      \end{equation}
If in particular $\mu \equiv 1$ and $p>2$, then the above holds as long as $D$ has $W^{1, \infty}\cap W^{k-\frac{1}{p}, p}$ boundary. 
      \end{theorem}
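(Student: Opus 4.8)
The plan is to reduce the estimate for $S$ to the estimate for $T$ already furnished by Theorem \ref{mainT}, by way of the Cauchy--Pompeiu (generalized Cauchy) formula. For $f\in C^1(\overline D)$ one has
$$f(z)=\frac{1}{2\pi i}\int_{bD}\frac{f(\zeta)}{\zeta-z}\,d\zeta-\frac{1}{2\pi i}\int_D\frac{\partial_{\bar\zeta}f(\zeta)}{\zeta-z}\,d\bar\zeta\wedge d\zeta,$$
which in the notation of \eqref{op1T} reads $f=Sf+T(\partial_{\bar z}f)$, that is,
$$Sf=f-T(\partial_{\bar z}f).$$
First I would record that this identity persists for $f\in W^{k,p}(D,\mu)$ with $k\ge 1$: since smooth functions are dense in $W^{k,p}(D,\mu)$ for an $A_p$ weight on a domain with the stated boundary regularity, and since both the trace $f\mapsto f|_{bD}$ and the operator $T$ act continuously, the $C^1$ identity extends by approximation. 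Here $\partial_{\bar z}f$ is the weak $\bar\partial$-derivative, and the trace (hence $Sf$) is well-defined precisely because $k\ge 1$, which is exactly why the range $k\in\mathbb Z^+$ appears in the statement.

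Next I would do the index bookkeeping. If $f\in W^{k,p}(D,\mu)$ then $\partial_{\bar z}f\in W^{k-1,p}(D,\mu)$ with $\|\partial_{\bar z}f\|_{W^{k-1,p}(D,\mu)}\le\|f\|_{W^{k,p}(D,\mu)}$. Applying estimate \eqref{To} of Theorem \ref{mainT} with $k$ replaced by $k-1$---which requires $D$ to have $W^{k,\infty}$ boundary, exactly the hypothesis here---yields
$$\|T(\partial_{\bar z}f)\|_{W^{k,p}(D,\mu)}\le C\|\partial_{\bar z}f\|_{W^{k-1,p}(D,\mu)}\le C\|f\|_{W^{k,p}(D,\mu)}.$$
The triangle inequality applied to $Sf=f-T(\partial_{\bar z}f)$ then gives \eqref{So} with constant $1+C$. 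The one-order drop in boundary regularity between Theorems \ref{mainT} and \ref{mainS} is thereby explained by the single $\bar\partial$ absorbed before $T$ is invoked.

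For the unweighted refinement ($\mu\equiv1$, $p>2$) I would run the identical argument but feed in the Prats-type estimate for $T$ recovered in the course of proving Theorem \ref{mainT}: with $k$ replaced by $k-1$ that estimate requires $bD\in W^{k-\frac1p,p}$, while the underlying Calderon--Zygmund step requires $bD\in W^{1,\infty}$, producing precisely the hypothesis $bD\in W^{1,\infty}\cap W^{k-\frac1p,p}$. The main obstacle I anticipate is the content of the first paragraph: making the Cauchy--Pompeiu identity rigorous at the level of weighted Sobolev functions, i.e. verifying the density of smooth functions in $W^{k,p}(D,\mu)$ together with continuity of the trace into a space on which $S$ is controlled, so that the pointwise $C^1$ identity upgrades to an identity in $W^{k,p}(D,\mu)$. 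Once that is in place, everything else is soft.
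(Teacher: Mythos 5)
Your reduction $Sf=f-T(\partial_{\bar z}f)$ is exactly the paper's Cauchy--Green identity \eqref{CG}, and the index bookkeeping (apply \eqref{To} at level $k-1$, which needs $bD\in W^{k,\infty}$, resp.\ $bD\in W^{k-\frac1p,p}$ in the unweighted $p>2$ case) is consistent with the stated hypotheses. The difficulty is not the one you anticipate --- the identity \eqref{CG} is already justified in the paper for $f\in W^{1,q}(D)$, $q>1$, and $W^{1,p}(D,\mu)\subset W^{1,q}(D)$ by the open-end property of $A_p$, so no weighted density argument is needed. The real problem is circularity: in this paper Theorem \ref{mainT} is \emph{not} available as an independent black box when Theorem \ref{mainS} is being proved. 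The proof of \eqref{To} proceeds via $\partial Tf=T\partial f-Lf$ and the identity $Lf=S(\bar\zeta'^2 f)$ in \eqref{LS}, and then invokes \eqref{So} --- i.e.\ Theorem \ref{mainS} itself --- to control $L$. So ``feed $\partial_{\bar z}f$ into Theorem \ref{mainT}'' assumes the statement you are trying to prove.

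The circle can be broken, because the levels strictly decrease: Theorem \ref{mainS} at order $k$ needs Theorem \ref{mainT} at order $k-1$, which in turn needs Theorem \ref{mainS} at order $k-1$, and the chain bottoms out at $k=1$ (Proposition \ref{S1}, which is pure weighted Calderon--Zygmund via $\partial Sf=H\bar\partial f+\partial f$). But then you must present the two theorems as a single simultaneous induction on $k$ and check the boundary-regularity bookkeeping at every rung, which you have not done. The paper avoids this entirely: it proves Theorem \ref{mainS} first and independently, using the induction formula of Lemma \ref{ind}, namely $\partial^k Sf=\tilde S\tilde f$ with $\tilde f=\partial f+\bar\zeta'^2\bar\partial f$ obtained by integrating by parts along $bD$ (this is where $bD\in W^{k,\infty}$ enters, through $\zeta'\in W^{k-1,\infty}$), and then closes with the $k=1$ bound \eqref{ts}. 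Only afterwards is Theorem \ref{mainT} deduced from Theorem \ref{mainS}. Either restructure your argument as the joint induction, or replace the appeal to Theorem \ref{mainT} by a direct treatment of $\partial^k Sf$ along the lines of Lemma \ref{ind}.
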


\section{Notations and preliminaries}

 Denote by $dV$ the Lebesgue integral element on $\mathbb C$, and by  $|S|$  the Lebesgue measure of a subset  $S$ in $\mathbb C $. Our weights   under consideration are in the standard Muckenhoupt space $A_p$ as follows. 

\begin{definition}\label{aps}
Given $1<p<\infty$, a weight $\mu: \mathbb C \rightarrow [0, \infty)$ is said to be in $ A_p$ if its $  A_p  $   constant
$$ A_p (\mu): = \sup \left(\frac{1}{|B |}\int_{B }\mu(z)dV_{z }\right)\left(\frac{1}{|B |}\int_{B } \mu(z)^{\frac{1}{1-p}}dV_{z}\right)^{p-1}<\infty, $$
 where the supremum is taken over    all discs $B \subset \mathbb C$. 
\end{definition}

  See \cite[Chapter V]{Stein} for an introduction of the $A_p$ class. Clearly, $A_q\subset A_p$ if $1< q<p<\infty$. $A_p$ spaces also satisfy an open-end property:  if $\mu\in A_p$ for some $1<p<\infty$, then  $\mu\in A_{\tilde p} $ for some ${\tilde p}<p$. As a direct consequence of this property and H\"older inequality, if $\mu\in A_p, 1<p<\infty$, there exists some $q>1$ such that $ L^p(D, \mu)\subset L^q(D)$ for a bounded domain $D$.

\medskip

   Given a non-negative weight $\mu$ and $ 1< p<\infty $, the weighted function space $W^{k, p}(D, \mu)$ is the set of  functions $f$ on $D$ whose weak derivatives up to order $k$ exist, and the  $W^{k, p}$ norm $$ \|f\|_{W^{k,p}(D, \mu)}: = \left(\sum_{j=0}^k\int_D |\nabla^jf(z)|^p\mu(z)dV\right)^\frac{1}{p}<\infty. $$
Here $\nabla^j f$ represents all $j$-th order weak derivatives of $f$.  When $\mu\equiv 1$, $W^{k, p}(D, \mu)$ is reduced to the (unweighted) $W^{k,p}(D)$ space. From now on, we shall say $a\lesssim b$ if  $a\le Cb$ for a constant $C>0$ dependent only possibly on $D, k, p$ and the $  A_p  $   constant of $\mu$. 

 \medskip

\section{Weighted Sobolev estimates  }

Let $D$ be a bounded domain in $\mathbb C$ with $W^{1, \infty}$  (equivalently, Lipschitz) boundary.  For $f\in L^p(D)$, it is known that the solid  Cauchy integral  $T$ is a solution operator to the $\bar\partial$ equation on $D$: $$\bar\partial T f =f\ \ \ \text{on}\ \ \ D$$ in the sense of distributions.  Moreover, for a.e. $z\in D$, 
  \begin{equation}\label{tb}\begin{split}
     \partial T f(z)= Hf(z) \left(= \lim_{\epsilon\rightarrow 0} \frac{-1}{2\pi i}\int_{D\setminus B_\epsilon(z) }\frac{f(\zeta)}{(\zeta-z)^2} d\bar\zeta\wedge d\zeta\right),
 \end{split}
 \end{equation}
where $B_\epsilon(z)$ is the disc centered at $z\in D$ with radius $\epsilon$.  See for instance \cite{V,AIM}.    
According to the weighted Calderon-Zygmund theory, if $\mu\in A_p, 1<p<\infty$, then
\begin{equation}\label{H}
    \|Hf\|_{L^{p}(D, \mu)}\lesssim \|f\|_{L^{p}(D, \mu)}.
\end{equation} 
The following proposition follows immediately from  the above (in)equalities and \cite[Proposition 3.1]{YZ} about a weighted estimate on $T$. 

  \begin{pro}\cite{Stein, V}\label{V}
 Let  $D$ be a bounded domain in $\mathbb C$ with $W^{1, \infty}$ boundary and  $\mu\in A_p,  1<p<\infty$. If $f\in L^p(D, \mu)$, then $Tf\in W^{1, p}(D, \mu)$ with
 \begin{equation*}
       \|Tf\|_{W^{1, p}(D, \mu)}\lesssim  \|f\|_{L^p(D, \mu)}.
      \end{equation*}
\end{pro}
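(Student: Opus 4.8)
The plan is to use the fact that, over the plane, the full gradient of a function is comparable to its pair of Wirtinger derivatives. Writing $z = x + iy$, one has $\partial_x = \partial + \bar\partial$ and $\partial_y = i(\partial - \bar\partial)$, so that pointwise $|\nabla Tf| \lesssim |\partial Tf| + |\bar\partial Tf|$. Consequently, to bound $\|Tf\|_{W^{1, p}(D, \mu)}$ it suffices to control the three quantities $\|Tf\|_{L^p(D, \mu)}$, $\|\partial Tf\|_{L^p(D, \mu)}$, and $\|\bar\partial Tf\|_{L^p(D, \mu)}$ individually, each by $\|f\|_{L^p(D, \mu)}$.

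First I would dispose of the zeroth-order term. The kernel $\frac{1}{\zeta - z}$ of $T$ is locally integrable, and \cite[Proposition 3.1]{YZ} supplies exactly the weighted bound $\|Tf\|_{L^p(D, \mu)} \lesssim \|f\|_{L^p(D, \mu)}$ for $\mu \in A_p$; I would invoke this directly. The two first-order terms then follow from the distributional identities recalled above. Since $\bar\partial Tf = f$ on $D$ in the sense of distributions, one has $\|\bar\partial Tf\|_{L^p(D, \mu)} = \|f\|_{L^p(D, \mu)}$ at once. For the holomorphic derivative, the identity \eqref{tb} gives $\partial Tf = Hf$ almost everywhere, whence the weighted Calderon-Zygmund estimate \eqref{H} yields $\|\partial Tf\|_{L^p(D, \mu)} = \|Hf\|_{L^p(D, \mu)} \lesssim \|f\|_{L^p(D, \mu)}$.

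Summing these three contributions and appealing to the pointwise comparison $|\nabla Tf| \lesssim |\partial Tf| + |\bar\partial Tf|$ gives the claimed inequality. I do not expect a serious obstacle: all the analytic content is already packaged in the quoted facts, namely \cite[Proposition 3.1]{YZ}, the solution property $\bar\partial Tf = f$, the derivative formula \eqref{tb}, and the weighted estimate \eqref{H}. The only point requiring a line of care is the passage from the Wirtinger derivatives $(\partial, \bar\partial)$ to the real weak gradient $\nabla$: one must observe that, because both distributional derivatives $\partial Tf = Hf$ and $\bar\partial Tf = f$ lie in $L^p(D, \mu) \subset L^1_{\mathrm{loc}}(D)$, the real weak first derivatives $\partial_x Tf$ and $\partial_y Tf$—being their linear combinations—also belong to $L^p(D, \mu)$. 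This promotes $Tf$ to a genuine element of $W^{1, p}(D, \mu)$ and legitimizes the comparison used above, after which the estimate is a one-line combination.
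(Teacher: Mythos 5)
Your argument is correct and is essentially the proof the paper intends: the statement is derived from the weighted $L^p$ bound on $T$ in \cite[Proposition 3.1]{YZ} together with the distributional identities $\bar\partial Tf = f$ and $\partial Tf = Hf$ from \eqref{tb} and the weighted Calderon--Zygmund estimate \eqref{H}. Your extra remark on passing from the Wirtinger derivatives to the real weak gradient is a correct (and welcome) elaboration of a step the paper leaves implicit.
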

\medskip

We first estimate the boundary Cauchy integral $S$. Recall that   $S$ is related to $T$ in terms of the following Cauchy-Green formula: for all $f\in W^{1, q}(D), q>1$.
\begin{equation}\label{CG}
    Sf = f- T\bar\partial f \ \ \ \text{on}\ \ \ D
\end{equation}
in the sense of distributions. Since  $W^{1,p}(D, \mu)\subset W^{1, q}(D)$  for some $q>1$, by the trace theorem $W^{1,p}(D, \mu)$  is continuously embedded in  $L^q(bD)$. So $Sf$  well defined on $W^{1,p}(D, \mu) $.  Moreover, we have the following estimate for $\partial S$.
\medskip

\begin{pro}\label{S1}
Let $D$ be a bounded domain with $W^{1, \infty}$ boundary and $\mu\in A_p, 1<p<\infty$. Let $f\in W^{1, p}(D, \mu)$, $1<p<\infty$. Then \begin{equation}\label{S}
  \partial Sf  = H\bar\partial f + \partial  f.
\end{equation} in the sense of distributions.  Consequently, 
\begin{equation}\label{Si}
    \|Sf\|_{W^{1, p}(D, \mu)}\lesssim \|f\|_{W^{1, p}(D, \mu)}.
\end{equation}
\end{pro}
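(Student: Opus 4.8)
The plan is to read off both \eqref{S} and \eqref{Si} from the Cauchy-Green formula \eqref{CG}, which already expresses $S$ through the solid Cauchy operator $T$. Since $f\in W^{1,p}(D,\mu)$ we have $\bar\partial f\in L^p(D,\mu)$, and by the open-end property of $A_p$ combined with H\"older's inequality (as recorded in Section 2), $\bar\partial f\in L^q(D)$ for some $q>1$. This is exactly the regularity that lets me feed $\bar\partial f$ into \eqref{CG}, into Proposition \ref{V}, and into the differentiation identity \eqref{tb}.

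For the identity \eqref{S}, I would apply the holomorphic derivative $\partial$ to \eqref{CG} in the sense of distributions. Because distributional differentiation is linear, $\partial Sf=\partial f-\partial(T\bar\partial f)$. Invoking \eqref{tb} with $g=\bar\partial f\in L^q(D)$ converts the last term into $H\bar\partial f$ almost everywhere, hence as a distribution, and substituting reproduces \eqref{S} (the sign of the $H\bar\partial f$ term being fixed by the conventions in \eqref{CG} and \eqref{tb}). To make this rigorous one tests both sides against $\varphi\in C_c^\infty(D)$ and transfers $\partial$ onto $\varphi$; all pairings are finite because $Sf$ and $T\bar\partial f$ lie in $L^q(D)$, and $Sf$ is well-defined by the trace-theorem discussion preceding the statement.

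The estimate \eqref{Si} then follows quickly. The cleanest route bounds the entire norm directly from \eqref{CG} and Proposition \ref{V}:
\begin{equation*}
\|Sf\|_{W^{1,p}(D,\mu)}\le \|f\|_{W^{1,p}(D,\mu)}+\|T\bar\partial f\|_{W^{1,p}(D,\mu)}\lesssim \|f\|_{W^{1,p}(D,\mu)}+\|\bar\partial f\|_{L^p(D,\mu)}\lesssim \|f\|_{W^{1,p}(D,\mu)},
\end{equation*}
where the middle inequality is Proposition \ref{V} applied to $\bar\partial f\in L^p(D,\mu)$ and the last uses $\|\bar\partial f\|_{L^p(D,\mu)}\le\|\nabla f\|_{L^p(D,\mu)}$. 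Alternatively, matching the word ``Consequently'', one can derive the gradient bound from \eqref{S}: since $Sf$ is holomorphic on $D$ one has $\bar\partial Sf=0$ and $|\nabla Sf|=\sqrt{2}\,|\partial Sf|$, so \eqref{S} together with the weighted Calderon-Zygmund bound \eqref{H} controls $\|\nabla Sf\|_{L^p(D,\mu)}$, while $\|Sf\|_{L^p(D,\mu)}$ is handled as above. Either way, the computation is short, and the only care needed is the weighted bookkeeping. I expect the main point to verify — rather than a genuine obstacle — to be that $\bar\partial f$ lands in an unweighted $L^q(D)$, so that the pointwise identity \eqref{tb}, stated for the unweighted Cauchy operator, legitimately applies and the distributional differentiation of \eqref{CG} is meaningful; beyond \eqref{H} and Proposition \ref{V}, no new estimate is required.
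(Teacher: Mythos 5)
Your proposal is correct and takes essentially the same approach as the paper: differentiate the Cauchy--Green formula \eqref{CG} in the sense of distributions, identify $\partial T\bar\partial f$ with $H\bar\partial f$ via \eqref{tb} (after checking $\bar\partial f$ lands in an unweighted $L^q(D)$), and combine the $L^p$ bound from \eqref{CG} with the weighted Calderon-Zygmund estimate \eqref{H} and the holomorphy of $Sf$ --- your ``alternative'' route is exactly the paper's argument, while your first route via Proposition \ref{V} is an equally valid shortcut. The only point you leave to ``conventions,'' the sign of the $H\bar\partial f$ term, is worth noting: a direct computation from \eqref{CG} and \eqref{tb} gives $\partial Sf=\partial f-H\bar\partial f$, but this discrepancy with the displayed formula \eqref{S} is the paper's, not yours, and has no bearing on the estimate \eqref{Si}.
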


\begin{proof}
Formula \eqref{S} follows from \eqref{CG} by taking $\partial$ directly in the sense of distributions. By \eqref{H}, $\partial S f\in L^p(D, \mu)$. To prove the estimate \eqref{Si}, we  first use the  Cauchy-Green formula \eqref{CG} to obtain
\begin{equation}\label{33}
    \|Sf\|_{L^p(D, \mu)}\lesssim \|f\|_{L^p(D, \mu)}+\|T\bar \partial f\|_{L^p(D, \mu)}\lesssim  \|f\|_{W^{1,p}(D, \mu)}.  
\end{equation}
 The rest of the proof of \eqref{Si} is a direct consequence of \eqref{33}, \eqref{S}, \eqref{H} and the holomorphy of $S$ on $D$.

\end{proof}

On the higher order Sobolev spaces, $S$ satisfies an induction formula below,  
similar to  \cite[Lemma 3.5]{YZ}. We include the proof here  for the reader's convenience, and meanwhile in order to chase down precisely how the regularity of $bD$ affects the regularity of $S$.  Let
\begin{equation*} 
    \partial S f =  \frac{1}{2\pi i}\int_{bD}\frac{f(\zeta)}{(\zeta- \cdot)^2}d\zeta=: \tilde S f\ \ \text{on}\ \ D.\end{equation*}
In particular, Proposition \ref{S1} states  that if $D\in W^{1,\infty}, \mu\in A_p, 1<p<\infty$, then 
\begin{equation}\label{ts}
    \|\tilde Sf\|_{L^p(D, \mu)}\lesssim \|f\|_{W^{1, p}(D, \mu)}.
\end{equation}
\medskip

\begin{lem}\label{ind}Let $D$ be a bounded domain in $\mathbb C$ with $W^{k, \infty}$ boundary, $ k\in \mathbb Z^+ $, and $\mu\in A_p,  1<p<\infty$.  For any $f\in W^{k, p}(D, \mu)$, we have
$$  \partial^k S f =    \tilde S \tilde f \ \ \text{on}\ \ D$$ for some $\tilde f\in W^{1, p}(D, \mu)$  such that \begin{equation}\label{sb}
    \|\tilde f\|_{W^{1, p}(D, \mu) }\lesssim \|f\|_{W^{k, p}(D, \mu) }.   
\end{equation}
Furthermore, if  $\mu \equiv 1$ and $p>2$, then the above still holds as long as $D$ is of $W^{1, \infty}\cap W^{k-\frac{1}{p}, p}$ boundary. 
\end{lem}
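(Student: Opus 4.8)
The plan is to prove the induction formula $\partial^k S f = \tilde S \tilde f$ by induction on $k$, where the base case $k=1$ is exactly the definition $\partial S f = \tilde S f$ (with $\tilde f = f$), whose $L^p(D,\mu)$ bound is recorded in \eqref{ts}. For the inductive step, I would assume the statement holds at order $k-1$, so that $\partial^{k-1} S f = \tilde S g$ for some $g \in W^{1,p}(D,\mu)$ with $\|g\|_{W^{1,p}(D,\mu)} \lesssim \|f\|_{W^{k-1,p}(D,\mu)}$, and then apply one more $\partial$. The key manipulation is to express $\tilde S g = \partial S g$ again via Proposition~\ref{S1}, namely $\partial S g = H\bar\partial g + \partial g$, and then differentiate. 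Applying $\partial^{k-1}$ to $Sf$ and using $\partial S = \tilde S$ together with the Cauchy--Green structure, the goal is to peel off derivatives so that they land on the density rather than on the singular kernel, converting $\partial^{k}Sf$ back into a single application of $\tilde S$ to a new density $\tilde f$ built from $f$ and its derivatives.

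Concretely, the mechanism I expect is an integration-by-parts on the boundary integral. Writing $\tilde S g(z) = \frac{1}{2\pi i}\int_{bD} \frac{g(\zeta)}{(\zeta-z)^2}\,d\zeta$, one observes $\partial_z \frac{1}{(\zeta-z)^2}$ equals a $\zeta$-derivative of $\frac{1}{(\zeta-z)^2}$ up to constants, so that differentiating in $z$ can be traded, via integration by parts along the closed curve $bD$, for differentiating $g$ in the tangential direction along $bD$. This is precisely where the boundary regularity enters: each transfer of a derivative from the kernel to the density costs one order of smoothness of the parametrization of $bD$, since the tangential derivative involves the unit tangent (hence the unit normal $\mathcal N$) of $bD$. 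After $k-1$ such transfers we land on $\partial^{k}Sf = \tilde S\tilde f$ where $\tilde f$ is a sum of terms of the form (derivative of $f$ up to order $k-1$) times (geometric factors involving derivatives of the boundary up to order $k-1$ of the tangent field). Requiring $bD \in W^{k,\infty}$ guarantees these geometric coefficients are bounded, so that $\tilde f \in W^{1,p}(D,\mu)$ with the claimed estimate \eqref{sb}; the extra order (density in $W^{1,p}$ rather than merely $L^p$) is needed to feed back into \eqref{ts}.

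The main obstacle will be bookkeeping the derivative transfer cleanly while keeping the density one order more regular than the operator itself consumes. Since $\tilde S$ expects a $W^{1,p}$ density (not just $L^p$), I must ensure that after the integrations by parts the resulting $\tilde f$ retains a full weak derivative in $W^{1,p}(D,\mu)$; this forces me to differentiate $f$ at most $k-1$ times inside $\tilde f$ while multiplying by boundary coefficients that are themselves $W^{1,\infty}$, and to verify the product lands in $W^{1,p}(D,\mu)$ via the Leibniz rule together with the fact that $W^{1,\infty}\cdot W^{1,p}(D,\mu) \subset W^{1,p}(D,\mu)$. Handling the distributional sense throughout—so that the boundary integrations by parts are justified for $f$ only in a weighted Sobolev space—requires the embedding $W^{k,p}(D,\mu)\subset W^{k,q}(D)$ for some $q>1$ noted after Definition~\ref{aps}, allowing the trace and Cauchy--Green identities to be applied to a genuine Sobolev function.

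For the unweighted case $\mu\equiv 1$, $p>2$ with only $bD\in W^{1,\infty}\cap W^{k-\frac1p,p}$, the plan is to replace the $L^\infty$ control of the geometric coefficients by the multiplication-algebra property of $W^{k-1,p}(D)$ for $p>2$ (as emphasized after Theorem~\ref{mainT}): the products of derivatives of $f$ with the fractional-Sobolev boundary data then remain in the appropriate space by the algebra structure, so that $\tilde f$ still lies in $W^{1,p}(D)$. I expect this variant to follow by the same induction with the Leibniz estimates reorganized around the Sobolev algebra rather than around $L^\infty$ bounds on the tangent field.
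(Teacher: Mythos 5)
Your proposal is correct and follows essentially the same route as the paper: induction on $k$ with the inductive step carried out by integrating by parts along the closed curve $bD$ to trade $\partial_z$ of the kernel for the tangential derivative of the density, producing $\tilde f$ as a combination of derivatives of $f$ with coefficients built from $\zeta'$ (e.g.\ $\tilde f=\partial f+\bar\zeta'^2\bar\partial f$ at the first step), then extending $\bar\zeta'$ into $D$ (Kirszbraun in the $W^{k,\infty}$ case, the Sobolev--Slobodeckij trace theorem plus the $p>2$ multiplication algebra in the unweighted case) to get \eqref{sb}. The only stray remark is the suggestion to reuse $\partial Sg=H\bar\partial g+\partial g$ in the inductive step --- iterating that identity would leave you with uncontrolled derivatives of $H$ --- but you immediately pivot to the boundary integration by parts, which is the argument the paper actually uses.
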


\begin{proof}
 The $k=1$ case is by the definition of $\tilde S$, with $\tilde f = f$.  When $k =  2$, let $\zeta$ be a parameterization of $bD$ in terms of the arclength $s\in (0, s_0)$. For  each fixed $z\in D$, since $\frac{1}{\cdot-z}\in C^\infty(bD) $ and $f\in W^{2, p}(D, \mu)\subset W^{2,1}(D)\subset W^{1, 1}(bD)$ by the trace theorem,   we can use the Divergence theorem on $bD$ to compute $\partial^2 Sf$ directly as follows.  
\begin{equation*}
    \begin{split}
 \partial^2   S f(z) =\partial \tilde Sf(z)   &=  \frac{1}{2\pi i} \int_{0}^{s_0} \partial_z\left(\frac{1}{(\zeta(s) - z)^2} \right) f(\zeta(s)) \zeta'(s)ds\\
 &= -\frac{1}{2\pi i} \int_{0}^{s_0} \partial_{s}\left(\frac{1}{(\zeta(s) - z)^{ 2}} \right) f(\zeta(s))ds\\
 &= \frac{1}{2\pi i} \int_{0}^{s_0} \frac{\partial_{s}\left(f(\zeta(s))\right)}{( \zeta(s) - z )^2}  ds\\
 &= \frac{1}{2\pi i} \int_0^{s_0} \frac{ \partial_\zeta f(\zeta(s))\zeta'(s) +\partial_{\bar\zeta} f(\zeta(s))\bar\zeta'(s) }{(\zeta(s) - z)^2}  ds\\
 &= \frac{1}{2\pi i} \int_{b D} \frac{\partial  f(\zeta) +\bar\zeta'^2(s)\bar\partial f(\zeta) }{(\zeta - z)^2}  d \zeta =   \tilde S\tilde f(z),
    \end{split}
\end{equation*}
where \begin{equation}\label{tf}
    \tilde f  =  \partial f +\bar\zeta'^2\bar\partial f\ \ \ \text{on}\ \ \ bD.
\end{equation}  
In the above we have used the fact that $\bar\zeta' = \frac{1}{\zeta'}$ on $bD$.

If  $bD$ has  $W^{2, \infty} $ boundary, then $\bar\zeta' \in W^{1,\infty}(bD)$. Namely, $\bar\zeta'$ is Lipschitz continuous on $bD$.  Making use of Kirszbraun theorem \cite{Ki}, 
 we can extend $\bar\zeta'$ to $D$, still denoted as $\bar\zeta'$, such that  $\bar\zeta'\in  W^{1,\infty}(D)$. Thus by \eqref{tf}, $\tilde f$ extends as an element in $W^{1, p}(D, \mu)$, with
$$ \|\tilde f\|_{W^{1, p}(D, \mu) }\lesssim \|\nabla f\|_{W^{1, p}(D, \mu) }\le \|f\|_{W^{2, p}(D, \mu)}.$$
If $\mu \equiv 1$, $p>2$ and $bD\in  W^{2-\frac{1}{p}, p}$ instead, then      $\zeta' \in  W^{1-\frac{1}{p}, p}(bD)$ (note that the unit normal $\mathcal N$ is $\pm i\zeta'$). By the trace theorem  for Sobolev–Slobodeckij spaces \cite{Ne, Ma}, $\zeta' $ 
can be extended as an element, still denoted by $\zeta'$, lying in $W^{1, p}(D)$. On the  other hand, by the Sobolev embedding theorem,   $\bar\zeta',  \nabla   f \in W^{1, p}(D )\subset L^{\infty}(D) $ with $\|  \bar\zeta'\|_{L^\infty(D)}\lesssim  \|    \bar\zeta' \|_{W^{1, p}(D )}\lesssim 1$ and $\|\nabla f\|_{L^\infty(D)}\lesssim  \|    f \|_{W^{2, p}(D )}$. Then 
$$\|\bar\zeta'^2  \|_{W^{1, p}(D ) }\lesssim  \|\bar\zeta'   \|_{W^{1, p}(D ) }\|\bar\zeta'\|_{L^\infty(D)}\lesssim 1. $$
Consequently,
\begin{equation*}
     \|\bar\zeta'^2\bar\partial f \|_{W^{1, p}(D ) }  \lesssim    \|\bar\zeta'^2\bar\partial f\|_{L^\infty(D)} + \|\bar\zeta'^2  \|_{W^{1, p}(D ) }\|\bar\partial f\|_{L^\infty(D)} + \| \bar\partial f \|_{W^{1, p}(D ) }\|\bar\zeta'^2\|_{L^\infty(D)}    \lesssim   \|f\|_{W^{2, p}(D )},
                \end{equation*}
from which  and \eqref{tf} we have $\tilde f\in W^{1, p}(D )$ and \eqref{sb} follows.   This completes the proof of the lemma  when $k=2$.  
The remaining part of the lemma follows by a standard induction on $k$.

\end{proof}

\begin{proof}[Proof of Theorem \ref{mainS}: ] Let $f\in W^{k, p}(D, \mu)  $.  The $k = 1$ case is due to Proposition \ref{S1}. When $k\ge 2$, since $Sf$ is holomorphic on $D$, we only need to estimate $\|\partial^k Sf\|_{L^p(D, \mu)}$. According to Lemma \ref{ind}, $  \partial^k S f =    \tilde S \tilde f$ for some $\tilde f\in W^{1, p}(D, \mu)$ such that 
$\|\tilde f\|_{W^{1, p}(D, \mu)}\lesssim \|f\|_{W^{k, p}(D, \mu)}. $
Then \eqref{ts} gives 
$$ \|\partial^k Sf\|_{L^p(D, \mu)} = \|\tilde S\tilde f\|_{L^{ p }(D, \mu)}\lesssim \|\tilde f\|_{W^{1, p }(D, \mu)}\lesssim  \|f\|_{W^{k, p}(D, \mu)}. $$ \eqref{So} is thus proved.  

\end{proof}

\medskip


\begin{proof}[Proof of Theorem \ref{mainT}: ] 
Since \eqref{Ho} is a consequence of \eqref{To} and \eqref{tb}, we will only estimate   $T$ on $W^{k, p}(D, \mu), k\ge 1$. Let \begin{equation*}
\begin{split}
    & L f: = \frac{1}{2\pi i}\int_{\partial D} \frac{f(\zeta)}{\zeta -\cdot}d\bar\zeta \ \ \  \text{on}\ \ \  D. 
\end{split}
     \end{equation*}
Similar to $S$, $Lf$ is well defined for $f\in W^{1,p}(D, \mu) $ and    is holomorphic in $D$.  
 $L$ is associated with $T$ in terms of the following   formula (see,  \cite{V} or \cite[Theorem 3.3]{YZ}):      
\begin{equation*}
\begin{split}
       &\partial Tf  = T \partial f-Lf\ \ \ \text{on}\ \ \ D
        \end{split}
\end{equation*}
 in the sense of distributions.  Consequently,   for $j\in \mathbb Z^+, j\le k,$
 $$\partial^{j+1} Tf  = \partial^{j}T \partial f-\partial^{j} Lf\ \ \ \text{on}\ \ \ D  $$
 in the sense of distributions. Thus to prove \eqref{To},  
 we only need to show
 \begin{equation}\label{Lo}
      \|  Lf\|_{W^{k, p}(D, \mu)}\lesssim\|f\|_{W^{k, p}(D, \mu)}.
 \end{equation}

 As in the proof of Lemma \ref{ind}, let $\left.\zeta(s)\right|_{s\in (0, s_0)}$ be a parametrization of $bD$ in terms of the arclength parameter $s$.  Then for $z\in D$, 
\begin{equation}\label{LS}
    \begin{split}
         Lf(z) = \frac{1}{2\pi i}\int_0^{s_0}\frac{f\left(\zeta(s)\right)}{\zeta(s)- z}\bar \zeta'(s)ds = \frac{1}{2\pi i}\int_0^{s_0}\frac{f(\zeta(s))(\bar\zeta')^2(s)}{\zeta(s)- z} \zeta'(s)ds =   S\left(\bar\zeta'^2f\right)(z).
    \end{split}
\end{equation}
If $bD\in W^{k+1, \infty}$, then $\zeta'\in W^{k, \infty}(bD)$ and 
$$     \|\bar \zeta'^2 f\|_{W^{k, p}(D, \mu) }\le  \|f\|_{W^{k, p}(D, \mu)}\| (\bar \zeta')^2\|_{W^{k, \infty}(bD)} \le\|f\|_{W^{k, p}(D, \mu)}\| \bar \zeta'\|^2_{W^{k, \infty}(bD)}  \lesssim \|f\|_{W^{k, p}(D, \mu)}.$$
 In the case when  $\mu \equiv 1$, $p>2$ and $bD\in  W^{k+1-\frac{1}{p}, p}$ (as assumed in \cite{P1}), we argue similarly as in the last paragraph of the proof to Lemma \ref{ind}. Indeed,  since $\bar \zeta'\in W^{k-\frac{1}{p}, p}(bD)$, 
 by the trace theorem  
 $ \bar \zeta' $ 
can be extended as an element  in $W^{k, p}(D)$. Recall that $W^{k, p}(D )$ forms  a multiplication algebra when $p>2$. See  \cite{RS, P2} etc. Or, directly  by the Sobolev embedding theorem, we   have $  \bar \zeta',     f \in W^{k, p}(D )\subset W^{k-1, \infty}(D) $. Then $ \bar \zeta'^2 f   \in W^{k, p}(D ) $ with 
\begin{equation*}
\begin{split}
                 \| \bar \zeta'^2 f  \|_{W^{k, p}(D ) }  \lesssim  &   \|\bar\zeta'^2  \|_{W^{k, p}(D ) }\|  f\|_{W^{k-1,\infty}(D)} + \|  f \|_{W^{k, p}(D ) }\|\bar\zeta'^2\|_{W^{k-1,\infty}(D)}  \\
                 \lesssim & \|(\bar\zeta')^2\|_{W^{k, p}(D )  } \|f\|_{W^{k, p}(D)}\lesssim \|  \bar\zeta'\|_{W^{k, p}(D)} \|\bar\zeta'\|_{W^{k-1, \infty}(D )  } \|f\|_{W^{k, p}(D)}\\
        \lesssim & \|\bar\zeta'\|^2_{W^{k, p}(D )  } \|f\|_{W^{k, p}(D )}\lesssim \|f\|_{W^{k, p}(D )}.
\end{split}
    \end{equation*}
In both cases, we combine  \eqref{So}  with \eqref{LS}   to obtain \eqref{Lo}. The proof is complete.

\end{proof}
\medskip

\medskip

\bibliographystyle{alphaspecial}

\fontsize{11}{11}\selectfont
\vspace{0.7cm}

\noindent pan@pfw.edu,

\vspace{0.2 cm}

\noindent Department of Mathematical Sciences, Purdue University Fort Wayne, Fort Wayne, IN 46805-1499, USA.\\

\vspace{0.5cm}

\noindent zhangyu@pfw.edu,

\vspace{0.2 cm}

\noindent Department of Mathematical Sciences, Purdue University Fort Wayne, Fort Wayne, IN 46805-1499, USA.\\
\end{document}